\newcommand \datum {May 26, 2021}
\numberwithin{equation}{section}
\theoremstyle{plain}
 \newtheorem{theorem}{Theorem}[section]
 \newtheorem{proposition}[theorem]{Proposition}
 \newtheorem{corollary}[theorem]{Corollary}
\theoremstyle{definition}
 \newtheorem{definition}[theorem]{Definition}
\newenvironment{enumeratei}{\begin{enumerate}[\quad\upshape (i)]} {\end{enumerate}}
\newcommand \tbf[1]  {\textbf{#1}} 
\newcommand \alg[1]  {\mathcal #1}
\newcommand \Nnul {\mathbb N_0}
\newcommand \Jir [1] {\textup J(#1)} 
\newcommand \Mir [1] {\textup M(#1)} 
\newcommand \Nplu {\mathbb N^+}
\newcommand \set [1]{\{#1\}}
\newcommand \id [1] {\textup{id}_{#1}}
\newcommand \sps {\mathcal{S}}
\newcommand \oisps {\mathcal{S}_{\bf{01}}}
\newcommand \lensps {\mathcal{S}_{\textup{\bf{len}}}}
\newcommand \zo {$\set{0,1}$-preserving}
\newcommand \lemb {length-preserving embedding}
\renewcommand \phi{\varphi}
\newcommand \mref[1] {\textup{(M\ref{#1})}}
\newcommand \lc [1]  {w_{\textup{left}}(#1)}
\newcommand \rc [1]  {w_{\textup{right}}(#1)}
\newcommand \ucov [1] {#1^+}
\newcommand \lcov [1] {#1^-}
\newcommand \lbound[1] {B_{\textup{left}}(#1)}
\newcommand \rbound[1] {B_{\textup{right}}(#1)}
\newcommand \Bnd[1] {\textup{Bnd}(#1)}
\newcommand \restrict [2] {#1\rceil_{\kern -1pt #2}}
\newcommand \diag [1]  {\Delta_{#1}}
\newcommand \Con [1]   {\textup{Con}(#1)}
\newcommand \relTheta {\mathrel{\Theta}}
\newcommand \ideal [1] {\mathord{\downarrow}#1}
\newcommand \filter [1] {\mathord{\uparrow}#1}
\newcommand \length [1]   {\textup{length}(#1)}
\newcommand \wh [1] {\widehat{#1}}
\newcommand\red[1]{{\textcolor{red}{#1}}}
\begin{document}
\title[Slim patch lattices]
{Slim patch lattices as absolute retracts and maximal lattices}

\author[G.\ Cz\'edli]{G\'abor Cz\'edli}
\email{czedli@math.u-szeged.hu}
\urladdr{http://www.math.u-szeged.hu/~czedli/}
\address{ Bolyai Institute, University of Szeged, Hungary}

\begin{abstract} \emph{Patch lattices},  introduced by G.~Cz\'edli and E.\,T.\ Schmidt in 2013, are the building stones for slim (and so necessarily finite and planar)  semimodular lattices with respect to gluing.  \emph{Slim semimodular lattices} were introduced by G.\ Gr\"atzer and E.\ Knapp in 2007, and they have been intensively studied since then. Outside lattice theory, these lattices played the main role in adding a uniqueness part to the classical Jordan--H\"older theorem for groups by G.~Cz\'edli and E.\,T.\ Schmidt in 2011, and they also led to results in combinatorial geometry. 
In this paper, we prove that \emph{slim patch lattices} are exactly the \emph{absolute retracts} with more than two elements for  the category of slim semimodular lattices  with length-preserving lattice embeddings as morphisms. Also, slim patch lattices are the same as the \emph{maximal objects} $L$ in this category such that $|L|>2$. 
Furthermore, slim patch lattices are characterized as the   \emph{algebraically closed lattices} $L$ in this category such that $|L|>2$.  Finally, we prove that if we consider $\{0,1\}$-preserving lattice homomorphisms rather than length-preserving ones, then the absolute retracts for the class of slim semimodular lattices are the at most 4-element boolean lattices.
\end{abstract}

\thanks{This research of the first author was supported by the National Research, Development and Innovation Fund of Hungary under funding scheme K 134851.}


\subjclass {06C10}

\keywords{Absolute retract,  slim  semimodular lattice, patch lattice, planar semimodular lattice, algebraically closed lattice, strongly algebraically closed lattice}

\date{\datum\hfill{{Hint: check the author's website for possible updates}}}

\maketitle

\section{Introduction}\label{sect:intro}
\subsection{Goal}
Postponing the definitions to Subsection~\ref{subsect:defsurv}, we formulate our goal as follows. We intend to  characterize slim patch lattices among slim semimodular lattices as absolute retracts and also as maximal lattices. Theorem~\ref{thmmain} in Subsection~\ref{subsect:result} indicates that this is possible, provided we turn the class of slim semimodular lattices into a category with appropriately chosen morphisms and we disregard the singleton lattice and the two-element lattice. The present paper continues the research started in Cz\'edli and Molkhasi~\cite{czgmolkhasi}, where all lattice homomorphisms among slim semimodular lattices were allowed.

\subsection{Outline} In addition to giving a short historical survey and presenting our motivations, Subsection~\ref{subsect:defsurv} gives most of the definitions that are needed to state our main result, Theorem \ref{thmmain}, in Subsection \ref{subsect:result}. A related result and two corollaries are also formulated in Subsection \ref{subsect:result}. In Section \ref{sect:proof}, we prove the results.

\subsection{Definitions and a mini-survey}\label{subsect:defsurv}
All lattices in the paper are assumed to be \emph{finite} even where this is not emphasized. For (a finite) lattice $L$, the 
set of non-zero \emph{join-irreducible elements} and that of non-unit \emph{meet-irreducible elements} will be denoted by $\Jir L$ and $\Mir L$, respectively.
They are posets (that is, partially ordered sets) with respect to the order inherited from $L$.
Following  Cz\'edli and Schmidt \cite{czgschtJH}, we say that a lattice $L$ is \emph{slim} if it is finite and $\Jir L$ is the union of two chains. If $x\wedge y\prec x\Rightarrow y\prec x\vee y$ for all $x,y\in L$, then $L$ is \emph{semimodular}. 
The intensive study of planar semimodular lattices began with  Gr\"atzer and Knapp \cite{gratzerknapp1,gratzerknapp3}. For \emph{these} lattices, our definition of slimness is equivalent to their original one: a  planar semimodular lattice is slim if and only if the five-element modular lattice $M_3$ with three atoms is not a cover-preserving sublattice of $L$. Since each planar semimodular lattice is easily reduced to a slim semimodular lattice by  Gr\"atzer and Knapp \cite{gratzerknapp1}, slim semimodular lattices play a distinguished role among planar semimodular lattices. 

By  Lemma 2.2 of Cz\'edli and Schmidt \cite{czgschtJH}, slim lattices are \emph{planar}. (Since this is not so if the original definition of slimness from Gr\"atzer and Knapp \cite{gratzerknapp1} is used, the term ``slim planar semimodular lattice" also occurs in the literature.)

The original importance of slim semimodular lattices in \emph{lattice theory} is explained by their role in studying the congruence lattices of finite lattices; see Gr\"atzer and Knapp \cite{gratzerknapp1,gratzerknapp3} together with the book chapter Cz\'edli and Gr\"atzer~\cite{czgggltsta} and its references. Also, see Cz\'edli~\cite{czgqplanar} for a connection between these lattices and a variant of planarity of bounded posets. Finally, see  Cz\'edli \cite{czglamps}, Cz\'edli and Gr\"atzer~\cite{czgginprepar}, and their references for recent developments. 

Notably,  slim semimodular lattices have already found applications \emph{outside lattice theory}. First, they played a crucial role in generalizing the classical Jordan--H\"older theorem for \emph{groups} in Gr\"atzer and Nation~\cite{gr-nation} and  Cz\'edli and Schmidt~\cite{czgschtJH}. 
Second, these lattices
led to new results in (combinatorial and convex) \emph{geometry}; see Adaricheva and Bolat~\cite{adaribolat}, Adaricheva and Cz\'edli~\cite{adariczg},
Cz\'edli~\cite{czgcircles}, Cz\'edli and Kurusa~\cite{czgkurusa}, and the references given in \cite{czgkurusa}. 
This connection is due to the canonical correspondence between slim semimodular lattices and (combinatorial) convex geometries of convex dimension at most 2; see Propositions 2.1 and 7.3 and Lemma 7.4 in Cz\'edli~\cite{czgcoord}.
Third, these lattices gave rise to interesting \emph{enumerative combinatorial} questions in several papers. For example, even the famous mathematical constant $e=\lim_{n\to\infty}(1+1/n)^n\approx 2.718\,2818$ appeared in a lattice theoretical and combinatorial paper; see Cz\'edli, D\'ek\'any,  Gyenizse, and Kulin~\cite{czgdgyk}.
Fourth, some connection between 
these lattices and finite model theory has recently been found in Cz\'edli~\cite{czgaxiombipart}. 
Fifth (and least), a computer game was developed based on these lattices; see  Cz\'edli and Makay~\cite{czgmakay}.

Next, we recall the following concept from Cz\'edli and Schmidt~\cite{czgschtpatchwork}; in a slightly modified form that needs less preparation. (The original definition will be given later in \eqref{pbx:pLrdmGhszrs}.)
An element of a lattice $x\in L$ is said to be \emph{doubly irreducible} if it has exactly one lower cover and exactly one (upper) cover. In other words, if $x\in \Jir L\cap\Mir L$.

\begin{definition}\label{def:slimpatch}
A slim semimodular lattice is a \emph{slim patch lattice} if it has exactly two doubly irreducible elements, these two elements are coatoms, and their meet is the smallest element of the lattice.
\end{definition}

For example, each of the three lattices drawn in Figure~\ref{fig1} is a slim patch lattice. (Their doubly irreducible elements are pentagon-shaped.)
By definition, a slim patch lattice consists of at least four elements. We proved in Cz\'edli and Schmidt~\cite{czgschtpatchwork} that every slim semimodular lattice can be obtained from slim patch lattices by gluing them together. In this sense, slim patch lattices are the ``small building stones'' among slim semimodular lattices. On  the other hand, slim patch lattices are ``large enough'' in the sense that each slim semimodular lattice $L$ can be embedded into a slim patch lattice $K$; for example, such a $K$ is constructed in  Cz\'edli and Molkhasi~\cite[Figure 2]{czgmolkhasi}. It will appear from our main result that patch lattices are ``maximally large'' in some sense.

Next, assume that 
\begin{equation}
\parbox{7.2cm}
{$\alg C$ is a (concrete) category that consists of some lattices as objects and each morphism of 
$\alg C$ is a lattice homomorphism;}
\label{pbx:CKTgr}
\end{equation}
we do not require that all lattice homomorphisms among
the objects of  $\alg C$  are  morphisms in $\alg C$. 
Using that every singleton subset of a lattice is a sublattice, it follows easily that
\begin{equation}
\parbox{8.3cm}{the monomorphisms of $\alg C$ given in \eqref{pbx:CKTgr} are lattice embeddings, that is, injective lattice homomorphisms.}
\label{eq:monoinj}
\end{equation} 
For lattices $L, K\in\alg C$, we say that $L$ is a \emph{retract} of $K$ in the category $\alg C$ if there is a morphism $\iota\colon L\to K$ in $\alg C$ and a morphism $\rho\colon K\to L$ in $\alg C$ such that $\rho\circ \iota$ is the identity morphism $\id L$ of $L$. 
Here, by \eqref{pbx:CKTgr},  $\iota$ and $\rho$ are lattice homomorphisms;  note that  we compose them  from right to left, that is, 
$(\rho\circ \iota)(x)=\rho(\iota(x))$. Note also that $\rho\circ \iota=\id L$ and \eqref{eq:monoinj}
imply that $\iota$ is a lattice embedding and it is a monomorphism in $\alg C$,  and $\rho$ is a surjective (in other words, an onto) map. The morphism $\rho$ above is called a \emph{retraction} of $\iota$. 

\begin{definition}\label{def:absretr} Let $\alg C$ be as in \eqref{pbx:CKTgr}.
A lattice $L\in\alg C$ is an \emph{absolute retract} for $\alg C$ if for every $K\in \alg C$ and every monomorphism $\iota\colon L\to K$, there exists a morphism $\rho\colon K\to L$ in $\alg C$ such that $\rho\circ\iota=\id L$. In other words, $L\in \alg C$ is an absolute retract for $\alg C$ if every monomorphism of $\alg C$ with domain $L$ has a retraction in $\alg C$. 
\end{definition}

This well-known concept for classes of algebras emerged in Reinhold~\cite{reinhold} in 1946.

\begin{definition}\label{def:maxobj}
Let $\alg C$ be as in  \eqref{pbx:CKTgr}.  We say that a lattice $L\in \alg C$ is a 
\emph{maximal object} of $\alg C$ if every monomorphism $L\to K$ of $\alg C$ is an isomorphism.
\end{definition}

It is quite rare that $\alg C$ has a maximal object. 
For a lattice $L$, an \emph{equation} in $L$ is a formal expression
\begin{equation}
p(a_1,\dots,a_m, x_1,\dots, x_n)\approx q(a_1,\dots,a_m, x_1,\dots, x_n)
\label{eq:eqdF}
\end{equation}
where $m\in\Nnul=\set{0,1,2,\dots}$, $n\in\Nplu=\Nnul\setminus\set 0$,
$p$ and $q$ are $(m+n)$-ary lattice terms,  the \emph{parameters} (also know as \emph{coefficients}) $a_1,\dots,a_m$ are in $L$, and 
$x_1,\dots, x_n$ are the \emph{unknowns} of \eqref{eq:eqdF}. If $\mu\colon L\to K$ is a lattice homomorphism, then the \emph{$\mu$-image} of equation \eqref{eq:eqdF} is the equation
\begin{equation*}
p(\mu(a_1),\dots,\mu(a_m), x_1,\dots, x_n)\approx q(\mu(a_1),\dots,\mu(a_m), x_1,\dots, x_n)
\end{equation*}
in $K$. For a set $\Sigma$ of equations in $L$, we let $\mu(\Sigma):=\set{\mu(e):e\in \Sigma}$. The following definition is taken from Schmid~\cite{schmid}, and it was used later in Molkhasi \cite{molkhasi16,molkhasi18b,molkhasi20}. 

\begin{definition}\label{def:algclose}
Let $\alg C$ be as in  \eqref{pbx:CKTgr}.  We say that a lattice $L\in \alg C$ is \emph{strongly algebraically closed in $\alg C$} if for any set $\Sigma$ of equations in $L$ and any monomorphism $\iota\colon L\to K$ in $\alg C$, if $\iota(\Sigma)$ has a solution in $K$, then $\Sigma$ has a solution in $L$. If we replace ``any set $\Sigma$'' by ``any finite set $\Sigma$'', then we obtain the concept of an \emph{algebraically closed} $L\in \alg C$.
\end{definition}

From now on, 
\begin{equation}
\parbox{7.2cm}{let $\sps$ denote the category of slim semimodular lattices with all lattice homomorphisms.}
\label{pbx:sScTrzrVrbRstR}
\end{equation}
Cz\'edli and Molkhasi~\cite{czgmolkhasi} proved that for a lattice $L\in \sps$, the following four conditions are equivalent: (1) $L$ is algebraically closed in $\sps$, (2) $L$ is strongly algebraically closed in $\sps$, (3) $L$ is  an absolute retract for $\sps$, and (4) $L$  is the singleton lattice. In addition to the importance of patch lattices, this result is also one of our motivations here.

\subsection{The results of the paper}\label{subsect:result}
A semimodular lattice $L$ is finite by definition, whence it has $0=0_L$ and $1=1_L$. For lattices $L$ and $K$ with 0 and 1, a lattice homomorphism $\phi\colon L\to K$ is a \emph{\zo{} homomorphism} if $\phi(0)=0$ and $\phi(1)=1$. If, in addition, $\phi$ is injective, then $\phi$ is a \emph{\zo{} embedding}. 
Also, for finite lattices $L$ and $K$, $\phi\colon L\to K$ is  said to be a \emph{\lemb{}} if $\phi$ is  a \zo{} embedding such that for all $x,y\in L$, if
$y$ covers $x$ in $L$ then $\phi(y)$ covers $\phi(x)$ in $K$. Equivalently, a \lemb{} is a \zo{} homomorphism $\phi\colon L\to K$ such that, for all $x,y\in L$, $x\prec y$ if and only if $\phi(x)\prec \phi(y)$. (Remember that $L$ and $K$ are finite by our general convention for this paper.)
Using that finite semimodular lattices satisfy the Jordan--H\"older condition, it is easy to see that if there is a \lemb{} $L\to K$, then $L$ and $K$ are of the same length. 
Let us emphasize that \zo{} homomorphisms and \lemb{}s are lattice-homomorphisms. To formulate our results, 
we need the following two categories.
\begin{align}
\parbox{8cm}{Let $\oisps$ denote the category of slim semimodular lattices  with \zo{} homomorphisms.}\label{pbx:sTrVsha}\\
\parbox{8cm}{Let $\lensps$ denote the category of slim semimodular lattices  with \lemb{}s.}\label{pbx:sTrVshb}
\end{align}
With more details, \eqref{pbx:sTrVshb} says that the objects of $\lensps$ are the slim semimodular lattices and the morphisms of $\lensps$ are the \lemb{s} among these lattices, and analogously for \eqref{pbx:sTrVsha}.

Now, based on Definitions \ref{def:slimpatch}--\ref{def:maxobj}
and notation \eqref{pbx:sTrVshb}, we are in the position to formulate the main result of the paper.

\begin{theorem}[Main Theorem]\label{thmmain}
For a slim semimodular lattice $L$,  the following three conditions are equivalent.
\begin{enumerate}[\upshape \qquad ({M}1)]
\item\label{thmmaina} $L$ is an absolute retract for $\lensps$.
\item\label{thmmainb} $L$ is a maximal object of $\lensps$.
\item\label{thmmainc} $L$ is a slim patch lattice or $|L|\leq 2$.
\end{enumerate}
\end{theorem}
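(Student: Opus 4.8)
The plan is to run the cycle $\mref{thmmaina}\Leftrightarrow\mref{thmmainb}\Leftrightarrow\mref{thmmainc}$, observing first that the equivalence $\mref{thmmaina}\Leftrightarrow\mref{thmmainb}$ is almost purely categorical, so that the genuine content is concentrated in $\mref{thmmainb}\Leftrightarrow\mref{thmmainc}$. The decisive feature of $\lensps$ is that \emph{every} morphism is a \lemb{} and hence injective; thus every morphism is a monomorphism, and a \emph{bijective} morphism is a lattice isomorphism whose inverse is again a \lemb{}, so it too is a morphism of $\lensps$. Granting this, $\mref{thmmainb}\Rightarrow\mref{thmmaina}$ is immediate: if $L$ is maximal and $\iota\colon L\to K$ is a monomorphism, then $\iota$ is an isomorphism and $\rho:=\iota^{-1}$ is a morphism with $\rho\circ\iota=\id L$. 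Conversely, for $\mref{thmmaina}\Rightarrow\mref{thmmainb}$, take any monomorphism $\iota\colon L\to K$ and a retraction $\rho$ with $\rho\circ\iota=\id L$; then $\rho$ is injective (being a morphism) and surjective (being a left inverse of a bijection onto its image), hence bijective, so $|K|=|L|$, and the injection $\iota$ between equal finite cardinalities is onto. Thus $\iota$ is an isomorphism, and since $\iota$ was arbitrary, $L$ is maximal.

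Next I would prove $\mref{thmmainb}\Rightarrow\mref{thmmainc}$ in contrapositive form: a slim semimodular lattice $L$ with $|L|>2$ that is not a slim patch lattice fails to be maximal. Here I would use the fact recalled in Subsection~\ref{subsect:defsurv}, that every slim semimodular lattice embeds into a slim patch lattice; one checks that the known construction (or a variant of it) can be arranged to be a \lemb{}, i.e.\ a morphism of $\lensps$. Applying it to $L$ yields a morphism $\iota\colon L\to P$ with $P$ a slim patch lattice. Since $L$ is not a patch lattice we have $L\not\cong P$, so the injective, length-preserving $\iota$ cannot be onto; hence $\iota$ is a monomorphism that is not an isomorphism, witnessing that $L$ is not a maximal object.

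It remains to prove $\mref{thmmainc}\Rightarrow\mref{thmmainb}$, which I expect to be the crux. The cases $|L|\le 2$ are trivial, since a \lemb{} out of a lattice of length $\le 1$ lands in a slim semimodular lattice of the same length, which must coincide with the domain. The real work is to show that a slim patch lattice $L$ is a maximal object, i.e.\ that every \lemb{} $\iota\colon L\to K$ into a slim semimodular $K$ is onto. My approach would be planar and boundary-theoretic. Writing $\length L=\length K=n$ (equal because a \lemb{} preserves length, as noted in Subsection~\ref{subsect:defsurv}), the image $\iota(L)$ is a cover-preserving $\set{0,1}$-sublattice of $K$ of length $n$; in particular its left and right boundary chains run between those of $K$. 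Using that $L$ is a patch lattice---so that the only elements of $\Jir L\cap\Mir L$ are its two coatoms, whose meet is $0$---I would argue that $\iota(L)$ already reaches the left and the right boundary of $K$ up to the coatom level, leaving no room for any $z\in K\setminus\iota(L)$: such a $z$ would force a doubly irreducible ``corner'' of $\iota(L)$ lying below the coatom level or off the two boundary chains, contradicting that $\iota(L)\cong L$ is a patch lattice.

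The main obstacle is precisely this rigidity statement, that \emph{a patch lattice is corner-saturated and therefore fills out any same-length host}. Turning it into a rigorous argument---say by induction on $|K\setminus\iota(L)|$, or on the boundary corners of $\iota(L)$, and using semimodularity to control the joins and meets of a hypothetical extra element $z$ with the boundary of $\iota(L)$ so as to produce a forbidden doubly irreducible element---is where the detailed anatomy of the left and right boundary chains and of the corner (doubly irreducible) elements, encoded by the boundary notation of Subsection~\ref{subsect:defsurv}, is needed. I therefore anticipate that essentially all the technical effort of the proof will go into this single implication.
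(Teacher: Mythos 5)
Your reduction of \mref{thmmaina} $\Leftrightarrow$ \mref{thmmainb} to the observation that every morphism of $\lensps$ is injective is correct and is in fact shorter than the paper's argument (the paper deliberately proves \mref{thmmaina} $\Rightarrow$ \mref{thmmainb} using only that the retraction $\rho$ is a lattice homomorphism with $\rho\circ\iota=\id L$, via the Gr\"atzer--Nation fact that a congruence of a finite semimodular lattice is determined by its restriction to a maximal chain, because that stronger form is reused later for $\oisps$). The two substantive implications, however, are not actually proved. For \mref{thmmainb} $\Rightarrow$ \mref{thmmainc} you invoke a length-preserving embedding of an arbitrary slim semimodular lattice into a slim patch lattice; the embedding mentioned in the introduction is not claimed to be length-preserving, and producing one is more than what is needed. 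The paper instead exhibits a single proper extension: if $L$ is not rectangular, it is obtained from a rectangular lattice by removing corners, so re-inserting one corner is a proper $\lensps$-monomorphism; if $L$ is rectangular but, say, $\lc L$ is not a coatom, one adjoins a single new element $d$ with $u\prec d\prec 1$, where $u\prec v\prec 1$ in the chain $\filter{\lc L}$.

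The real gap is \mref{thmmainc} $\Rightarrow$ \mref{thmmainb}, which you explicitly leave as an acknowledged ``rigidity statement''; this is exactly where the paper's work lies, and your sketch also misses one of the two cases. A hypothetical $p\in K\setminus L$ need not interact with the boundary of $L$ at all: by the Jordan curve theorem it may lie in the interior of the boundary polygon of $L$. The paper treats the two positions separately. If $p$ is interior, it sits inside a $4$-cell $\set{b,u,v,t}$ of $L$ (Gr\"atzer--Knapp), Kelly--Rival gives $b<p<t$, the Jordan--H\"older condition upgrades this to $b\prec p\prec t$, and then $b$ has three covers, contradicting the fact that elements of slim semimodular lattices have at most two covers. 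If $p$ is exterior, a maximal chain through $p$ crosses the boundary polygon at some $r\prec s$ with $r\in\Bnd L$ and $s\notin L$; the two-cover bound forces $r\in\Mir L$, and since the left boundary of a patch lattice is $\ideal{\lc L}\cup\set 1$ with $\ideal{\lc L}\setminus\set 0\subseteq\Jir L$, either $r$ is a third doubly irreducible element of $L$ (impossible) or $r=\lc L$, which contradicts $\lc L$ being a coatom in the cover-preserving extension $K$. Your instinct that an extra element forces a forbidden doubly irreducible element matches the exterior case, but without the interior case and without this concrete boundary analysis the implication is not established.
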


This theorem  clearly yields the following corollary, which explains the title of the paper. Let $\lensps^{\geq 3}$ denote the full subcategory of $\lensps$ consisting of at least three-element slim semimodular lattices and all $\lensps$-morphisms among them.

\begin{corollary}\label{cor:xPlTl}
In  $\lensps^{\geq 3}$, slim patch lattices are characterized as absolute retracts. Also,   slim patch lattices are characterized as the maximal objects of  $\lensps^{\geq 3}$.
\end{corollary}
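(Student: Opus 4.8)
The plan is to \emph{deduce} Corollary~\ref{cor:xPlTl} directly from Theorem~\ref{thmmain}; the only genuine content is controlling the passage from $\lensps$ to its full subcategory $\lensps^{\geq 3}$. I would first record two facts. Since every morphism of $\lensps$ is a \lemb, it is injective, so in $\lensps$---and likewise in $\lensps^{\geq 3}$---every morphism is automatically a monomorphism; thus ``monomorphism'' may be read throughout as ``morphism''. Moreover, as $\lensps^{\geq 3}$ is a \emph{full} subcategory of $\lensps$, whenever two lattices $L,K$ both lie in $\lensps^{\geq 3}$ a map $L\to K$ is a morphism of $\lensps^{\geq 3}$ if and only if it is a morphism of $\lensps$; hence retractions and isomorphisms transfer between the two categories as long as every lattice that occurs has at least three elements.

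For the absolute-retract claim, the forward direction is immediate: a slim patch lattice $L$ satisfies $|L|\geq 4$, so $L\in\lensps^{\geq 3}$, and by \mref{thmmainc}$\Rightarrow$\mref{thmmaina} it is an absolute retract for $\lensps$; given any monomorphism $\iota\colon L\to K$ of $\lensps^{\geq 3}$, which is also one of $\lensps$, its $\lensps$-retraction is by fullness a morphism of $\lensps^{\geq 3}$, so $L$ is an absolute retract for $\lensps^{\geq 3}$. The converse is where I expect the only real subtlety, namely the worry that shrinking the category might create spurious absolute retracts. I would argue contrapositively: if $L\in\lensps^{\geq 3}$ is not a slim patch lattice, then $|L|\geq 3>2$ forces \mref{thmmainc} to fail, so Theorem~\ref{thmmain} says $L$ is not an absolute retract for $\lensps$, whence some monomorphism $\iota\colon L\to K$ of $\lensps$ has no $\lensps$-retraction. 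The decisive point is that $\iota$ is injective, so $|K|\geq|L|\geq 3$ and therefore $K\in\lensps^{\geq 3}$; thus $\iota$ is already a monomorphism of $\lensps^{\geq 3}$, and any retraction of it in $\lensps^{\geq 3}$ would, by fullness, be a retraction in $\lensps$, a contradiction. Hence $L$ is not an absolute retract for $\lensps^{\geq 3}$, as required.

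I would prove the maximal-object claim by the same pattern, with \mref{thmmainb} in place of \mref{thmmaina}. A slim patch lattice is a maximal object of $\lensps$ by \mref{thmmainc}$\Rightarrow$\mref{thmmainb}, and since every monomorphism of $\lensps^{\geq 3}$ is one of $\lensps$, it stays maximal in the subcategory. Conversely, if $L\in\lensps^{\geq 3}$ is not a slim patch lattice, Theorem~\ref{thmmain} supplies a monomorphism $\iota\colon L\to K$ of $\lensps$ that is not an isomorphism; being injective but not surjective, $\iota$ gives $|K|>|L|\geq 3$, so $K\in\lensps^{\geq 3}$ and $\iota$ witnesses that $L$ is not a maximal object of $\lensps^{\geq 3}$. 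As in the retract case, the load-bearing observation is that injectivity of the morphisms keeps the codomain $K$ inside $\lensps^{\geq 3}$, so nothing is lost in restricting the category.
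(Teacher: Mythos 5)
Your proposal is correct and follows the same route as the paper, which simply derives the corollary from Theorem~\ref{thmmain} (the paper states that the theorem ``clearly yields'' it and gives no further argument). Your extra bookkeeping --- that morphisms of $\lensps$ are injective, so the codomain of any witnessing monomorphism automatically has at least three elements and stays in the full subcategory $\lensps^{\geq 3}$ --- is precisely the detail the paper leaves implicit, and you have verified it correctly.
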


It is not rare that a class of ``important objects'' in algebra (and in some other fields of mathematics) has a category theoretical characterization. Corollary~\ref{cor:xPlTl} gives two such  characterizations of the class of slim patch lattices. Hence, in addition to the original motivation of introducing these lattices in Cz\'edli and Schmidt~\cite{czgschtpatchwork}, Corollary~\ref{cor:xPlTl} is another sign that slim patch lattices deserve attention.
So is the following corollary, which is based on Definitions \ref{def:slimpatch} and \ref{def:algclose};  it will be proved in Section~\ref{sect:proof}.

\begin{corollary}\label{cor:PclSd} For a slim semimodular lattice $L$, the following three conditions are equivalent.
\begin{enumeratei}
\item\label{cor:PclSda} $L$ is a slim patch lattice or $|L|\leq 2$.
\item\label{cor:PclSdb} $L$ is algebraically closed in $\lensps$.
\item\label{cor:PclSdc} $L$ is strongly algebraically closed in $\lensps$.
\end{enumeratei}
\end{corollary}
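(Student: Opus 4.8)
The plan is to extend the equivalence of Theorem~\ref{thmmain} by the two new conditions, establishing the cycle (i)$\Rightarrow$(iii)$\Rightarrow$(ii)$\Rightarrow$(i), where I abbreviate the three conditions of the corollary by (i), (ii), (iii). The implication (iii)$\Rightarrow$(ii) needs no work, since every finite set of equations is in particular a set of equations.

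For (i)$\Rightarrow$(iii), recall that by Theorem~\ref{thmmain} condition (i) is equivalent to $L$ being a maximal object of $\lensps$. Hence every $\lensps$-monomorphism $\iota\colon L\to K$ is an isomorphism, and $j:=\iota^{-1}\colon K\to L$ is a lattice isomorphism with $j\circ\iota=\id L$. Given an arbitrary set $\Sigma$ of equations in $L$ for which $\iota(\Sigma)$ has a solution $(d_1,\dots,d_n)$ in $K$, I would apply $j$: since $j$ commutes with all lattice terms and fixes the parameters in the sense that $j(\iota(a_i))=a_i$, each $K$-valid equation $p(\iota(a_1),\dots,d_1,\dots)=q(\iota(a_1),\dots,d_1,\dots)$ turns into the $L$-valid equation $p(a_1,\dots,j(d_1),\dots)=q(a_1,\dots,j(d_1),\dots)$. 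Thus $(j(d_1),\dots,j(d_n))$ solves $\Sigma$ in $L$, and $L$ is strongly algebraically closed; as $\Sigma$ was arbitrary, this yields (iii) directly, not merely (ii).

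The essential implication is (ii)$\Rightarrow$(i), which I would prove contrapositively. Suppose $L$ is neither a slim patch lattice nor of cardinality at most $2$. Then, by Theorem~\ref{thmmain}, $L$ is not a maximal object of $\lensps$, so there is a proper monomorphism $\iota\colon L\to K$ in $\lensps$ with $\iota(L)\subsetneq K$ and $K$ a finite slim semimodular lattice. I would refute algebraic closedness with the finite set $\Sigma$ that encodes the entire operation table of $K$ over $\iota(L)$: take one unknown $x_c$ for every $c\in K\setminus\iota(L)$, write $[c]$ for the parameter $a\in L$ when $c=\iota(a)$ and for the unknown $x_c$ otherwise, and let $\Sigma$ collect all equations $[c]\vee[c']\approx[c\vee c']$ and $[c]\wedge[c']\approx[c\wedge c']$ for $c,c'\in K$. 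The assignment $x_c\mapsto c$ shows that $\iota(\Sigma)$ is solvable in $K$. Crucially, a solution of $\Sigma$ in $L$ is nothing but a lattice homomorphism $\rho\colon K\to L$ with $\rho\circ\iota=\id L$, i.e.\ a lattice-homomorphism retraction of $\iota$.

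Therefore everything comes down to the claim that I can choose the proper extension $\iota\colon L\to K$ so that $\iota$ admits no lattice-homomorphism retraction; this is the step I expect to be the main obstacle. The kernel of a putative retraction $\rho$ would be a congruence $\theta\in\Con K$ that is the identity on $\iota(L)$ and satisfies $K/\theta\cong L$; since $|K|>|L|$ forces $\theta$ to be nontrivial, it suffices to arrange that \emph{every} nontrivial congruence of $K$ collapses some pair of elements of $\iota(L)$, which rules out such a $\theta$. To guarantee this I would build (or select from the proof of Theorem~\ref{thmmain}) a minimal proper extension and use the congruence theory of slim semimodular lattices, where congruences are controlled by projectivity of prime intervals along trajectories: the expectation is that the prime intervals created by the new elements lie on trajectories already meeting $\iota(L)$, so that no nontrivial congruence of $K$ can avoid $\iota(L)$. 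Once this is secured, $\Sigma$ has no solution in $L$, so $L$ is not algebraically closed, which completes the contrapositive and hence the proof of Corollary~\ref{cor:PclSd}.
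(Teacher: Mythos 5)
Your route is genuinely different from the paper's: the paper disposes of this corollary in three lines by observing that the proof of Proposition~1.1 of Cz\'edli and Molkhasi (which shows that algebraically closed $=$ strongly algebraically closed $=$ absolute retract) goes through verbatim for $\lensps$, and then invokes Theorem~\ref{thmmain}. You instead give a direct, self-contained reduction to the maximal-object characterization \mref{thmmainb}. Your implications (iii)$\Rightarrow$(ii) and (i)$\Rightarrow$(iii) are correct as written: since every $\lensps$-monomorphism out of a maximal object is an isomorphism, transporting a solution back along $\iota^{-1}$ is immediate. Your encoding of the operation table of $K$ as a finite set $\Sigma$ of equations, so that a solution of $\Sigma$ in $L$ is precisely a lattice-homomorphism retraction of $\iota$, is also correct and is in fact the same device used in the cited proposition.

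The gap is the step you flag yourself: you still owe the claim that some proper monomorphism $\iota\colon L\to K$ in $\lensps$ admits no lattice-homomorphism retraction, and the machinery you propose for it (choosing a minimal extension and controlling congruences by trajectories) is both unexecuted and unnecessary. The claim is already established, for \emph{every} non-surjective $\lensps$-monomorphism, inside the paper's proof of \mref{thmmaina}~$\Rightarrow$~\mref{thmmainb}; see \eqref{pbx:wRhgRsZkrTpsz}. Concretely: if $\rho\colon K\to L$ is any lattice homomorphism with $\rho\circ\iota=\id L$ and $C$ is a maximal chain of $L$, then $\iota(C)$ is a maximal chain of $K$ because $\iota$ is length-preserving, the kernel $\Theta$ of $\rho$ restricts to $\diag{\iota(C)}$ on it, and the Gr\"atzer--Nation result \eqref{pbx:GNlmrM} forces $\Theta=\diag K$, whence $\rho$ is bijective and $\iota=\rho^{-1}$ is an isomorphism. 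So the condition you hoped to ``arrange'' --- that every nontrivial congruence of $K$ meets $\iota(L)$ --- holds automatically for any length-preserving extension, and you may take for $\iota$ an arbitrary non-isomorphism monomorphism supplied by the failure of \mref{thmmainb}. With that substitution your argument closes and, unlike the paper's, needs no external reference beyond Theorem~\ref{thmmain} and \eqref{pbx:GNlmrM}.
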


Next, we turn our attention to  category $\oisps$; see \eqref{pbx:sTrVsha}. It is not hard to see that there is no maximal object in $\oisps$. (For example, this will prompt follow from \eqref{pbx:frkXtSpS}.) The counterpart of the Main Theorem for this category is the following. 

\begin{proposition}\label{prop:wRw}
Let $L$ be a slim semimodular lattice. Then $L$ is an absolute retract for  $\oisps$ if and only if $L$ is an at most $4$-element boolean lattice.
\end{proposition}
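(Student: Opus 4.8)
The plan is to prove the two implications separately. The ``if'' direction follows cleanly from the coordinatization of slim semimodular lattices, while the ``only if'' direction rests on the observation that in $\oisps$ a retract of a grid (a direct product of two chains) is again a grid, combined with a complementation obstruction. Throughout, $C_p$ denotes a chain of length $p$, so that $B_0\cong C_0\times C_0$, $B_1\cong C_1\times C_0$, and $B_2\cong C_1\times C_1$.

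\emph{The ``if'' direction.} By the coordinatization of slim semimodular lattices (Cz\'edli~\cite{czgcoord}), every slim semimodular lattice $K$ has a $\{0,1\}$-preserving lattice embedding $\kappa=(\pi_1,\pi_2)\colon K\to E\times F$ into the direct product of two finite chains $E,F$, with $\pi_1,\pi_2$ surjective. For a chain $E$, let $\epsilon_E\colon E\to B_1$ be the $\{0,1\}$-preserving homomorphism sending $0_E$ to $0$ and every other element to $1$. Now $B_0$ is an absolute retract because any $\{0,1\}$-preserving monomorphism out of $B_0$ forces its codomain to equal $B_0$. For $B_1$, given a monomorphism $\iota\colon B_1\to K$ (so $|K|\ge2$ and at least one of $E,F$ is nontrivial), the map $\epsilon_E\circ\pi_1\circ\kappa$ (or its $F$-analogue) is a $\{0,1\}$-preserving surjection $\rho\colon K\to B_1$ with $\rho\circ\iota=\id{B_1}$. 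For $B_2=B_1\times B_1$ with atoms $a=(1,0)$, $b=(0,1)$, a monomorphism $\iota\colon B_2\to K$ carries $a,b$ to a complementary pair $a',b'$ with $a',b'\notin\{0_K,1_K\}$; since $E,F$ are chains, $\kappa(a')$ and $\kappa(b')$ are forced to be the opposite corners $(1_E,0_F)$ and $(0_E,1_F)$, so $E,F$ are both nontrivial and $\rho:=(\epsilon_E\circ\pi_1,\ \epsilon_F\circ\pi_2)\circ\kappa\colon K\to B_2$ satisfies $\rho(a')=a$, $\rho(b')=b$, and hence $\rho\circ\iota=\id{B_2}$.

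\emph{Reduction to grids.} Conversely, let $L$ be an absolute retract; we may assume $|L|\ge3$, as otherwise $L\in\{B_0,B_1\}$. Applying the coordinatization to $L$ gives a monomorphism $\kappa\colon L\to G:=E\times F$ of $\oisps$, which by hypothesis admits a retraction $\sigma\colon G\to L$ with $\sigma\circ\kappa=\id L$. As $\sigma$ is onto, $L$ is a quotient of the grid $G$. Since in a grid all prime intervals lying in a fixed direction are pairwise projective, one has $\operatorname{Con}(E\times F)\cong\operatorname{Con}(E)\times\operatorname{Con}(F)$, and every quotient of $E\times F$ is itself a product of two chains; thus $L\cong C_p\times C_q$ is a grid. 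Because $|L|\ge3$, the whole claim now reduces to ruling out the case $\max(p,q)\ge2$.

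\emph{The complementation obstruction and the main obstacle.} So suppose $L\cong C_p\times C_q$ with, say, $p\ge2$. The atom $(1,0)$ has no complement in $L$, since a complement would need first coordinate simultaneously $0$ and $p\ge2$. The hard part will be to construct a slim semimodular lattice $K$ together with a $\{0,1\}$-preserving embedding $\jmath\colon L\to K$ such that $\jmath((1,0))$ \emph{does} have a complement $c$ in $K$. Granting this, the retraction $\rho\colon K\to L$ supplied by the absolute-retract property gives $\rho(c)\vee(1,0)=\rho(c)\vee\rho(\jmath((1,0)))=\rho(c\vee\jmath((1,0)))=\rho(1_K)=1$ and, dually, $\rho(c)\wedge(1,0)=0$ (using that $\rho$ is a $\{0,1\}$-preserving homomorphism and $\rho\circ\jmath=\id L$), so $\rho(c)$ would be a complement of $(1,0)$ in $L$ --- a contradiction. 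The extension $K$ is built by adjoining to $L$ a new coatom complementary to $(1,0)$, realized through a fork/gluing insertion in the spirit of Cz\'edli and Schmidt~\cite{czgschtpatchwork} and the patch-lattice embedding of Cz\'edli and Molkhasi~\cite{czgmolkhasi}, arranged so that $K$ stays slim and semimodular; checking that this is always possible (for $p\ge3$ it genuinely requires a non-distributive insertion, not merely the lengthening of a boundary chain) is the principal technical point. Once it is in place we obtain $\max(p,q)\le1$, so the only grid with $|L|\ge3$ that survives is $B_2$, which completes the characterization.
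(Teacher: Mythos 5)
Your argument rests on the claim that every slim semimodular lattice $K$ admits a $\{0,1\}$-preserving \emph{lattice} embedding $\kappa\colon K\to E\times F$ into a direct product of two finite chains, and this claim is false. Take $K=S_7$, the seven-element slim patch lattice in the middle of Figure~\ref{fig1}: it has three coatoms $u$, $a$, $b$ whose pairwise joins all equal $1$. In a product $E\times F$ of two chains, if three elements have all pairwise joins equal to $(1_E,1_F)$, then at least two of them have first coordinate $1_E$ and at least two have second coordinate $1_F$, so by pigeonhole one of the three is $(1_E,1_F)$ itself; hence no such triple of elements $<1$ exists, and $S_7$ does not embed into a product of two chains at all (let alone $\{0,1\}$-preservingly). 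What the coordinatization in Cz\'edli~\cite{czgcoord} provides is a correspondence with convex geometries, essentially the map $x\mapsto(x_1,x_2)$ recording the largest joinand of $x$ on each of the two chains of $\Jir K$; this map is injective and meet-friendly but does \emph{not} preserve joins (in $S_7$, with $a$ covering $v$ and $u=v\vee v'$, one has $a\vee u=1$ while the coordinatewise join of the images of $a$ and $u$ is the image of $a\vee v'$-type data, not of $1$). This single error invalidates both halves of your proof: the ``if'' direction (your retraction $\rho=(\epsilon_E\circ\pi_1,\epsilon_F\circ\pi_2)\circ\kappa$ has no $\kappa$ to compose with --- and note that $B_2$ sits as a $\{0,1\}$-sublattice $\{0,a,b,1\}$ inside $S_7$, so a retraction really must be built on $S_7$-like lattices) and the ``reduction to grids'' step of the ``only if'' direction, where you conclude $L\cong C_p\times C_q$.

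Even granting the reduction, the remaining step you yourself flag as ``the principal technical point'' --- producing, for each grid with $\max(p,q)\ge2$, a slim semimodular $\{0,1\}$-extension in which the atom $(1,0)$ acquires a complement --- is not carried out, so the ``only if'' direction would still be incomplete. For comparison, the paper avoids products of chains entirely: for ``only if'' it first shows an absolute retract for $\oisps$ is a maximal object of $\lensps$, invokes Theorem~\ref{thmmain} to get that $L$ is a slim patch lattice or $|L|\le2$, and then excludes $|L|\ge5$ by adding a fork to a $4$-cell of a cover-preserving copy of $S_7$ and deriving a contradiction from the Swing Lemma; for ``if'' it extends $L'$ to a slim rectangular lattice $K$ and retracts onto $L$ using the prime ideals $\ideal{\lc K}$ and $\ideal{\rc K}$ (via \eqref{pbx:prMdlJgbbW}) and the four-block congruence they generate. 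If you want to salvage your outline, you would need to replace the embedding into $E\times F$ by an embedding into a slim rectangular lattice (which does exist, by \eqref{pbx:sLcRGns} and \eqref{pbx:rmCrsKcCs}) and work with its two boundary prime ideals instead of projections onto chain factors.
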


Similarly to Corollary~\ref{cor:PclSd}, we have the following statement.

\begin{corollary}\label{cor:lGfnG} For a slim semimodular lattice $L$, the following three conditions are equivalent.
\begin{enumeratei}
\item\label{cor:PclSda} $L$ is algebraically closed in $\oisps$.
\item\label{cor:PclSdb} $L$ is strongly algebraically closed in $\oisps$.
\item\label{cor:PclSdc} $L$ is  an at most $4$-element boolean lattice.
\end{enumeratei}
\end{corollary}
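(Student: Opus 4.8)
The plan is to prove the cycle of implications (iii) $\Rightarrow$ (ii) $\Rightarrow$ (i) $\Rightarrow$ (iii), using Proposition~\ref{prop:wRw} as the bridge to absolute retracts, since that proposition already identifies the at most $4$-element boolean lattices as exactly the absolute retracts for $\oisps$. The implication (ii) $\Rightarrow$ (i) is immediate from Definition~\ref{def:algclose}, because every finite set of equations is in particular a set of equations. Hence only two genuine steps remain, namely (iii) $\Rightarrow$ (ii) and (i) $\Rightarrow$ (iii), and both reduce to passing between absolute retracts and (strong) algebraic closedness.

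For (iii) $\Rightarrow$ (ii), I would first record the general fact that an absolute retract for $\oisps$ is automatically strongly algebraically closed in $\oisps$. Indeed, let $L$ be an absolute retract, let $\Sigma$ be an arbitrary set of equations in $L$, let $\iota\colon L\to K$ be a monomorphism of $\oisps$, and suppose $\iota(\Sigma)$ has a solution $x_1=b_1,\dots,x_n=b_n$ in $K$. By Definition~\ref{def:absretr} there is a retraction $\rho\colon K\to L$ with $\rho\circ\iota=\id L$. Applying the lattice homomorphism $\rho$ to each satisfied image equation, and using that lattice terms commute with homomorphisms together with $\rho\circ\iota=\id L$, one obtains that $x_1=\rho(b_1),\dots,x_n=\rho(b_n)$ solves $\Sigma$ in $L$. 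Since $\Sigma$ was arbitrary, $L$ is strongly algebraically closed; combined with the equivalence of (iii) and ``absolute retract'' furnished by Proposition~\ref{prop:wRw}, this yields (iii) $\Rightarrow$ (ii).

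The key step is (i) $\Rightarrow$ (iii), where the plan is to show directly that an algebraically closed $L$ is an absolute retract, whence (iii) follows again by Proposition~\ref{prop:wRw}. Given a monomorphism $\iota\colon L\to K$ of $\oisps$, I would introduce one unknown $x_c$ for every element $c\in K$ (a \emph{finite} set of unknowns, since $K$ is finite) and let $\Sigma$ consist of the equations $x_{c\vee d}\approx x_c\vee x_d$ and $x_{c\wedge d}\approx x_c\wedge x_d$ for all $c,d\in K$, together with $x_{\iota(a)}\approx a$ for all $a\in L$, the elements of $L$ serving as parameters; the injectivity of $\iota$, guaranteed by \eqref{eq:monoinj}, makes the last batch well defined. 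The assignment $x_c\mapsto c$ solves $\iota(\Sigma)$ in $K$, so since $L$ is algebraically closed and $\Sigma$ is finite, $\Sigma$ has a solution $x_c=\rho(c)\in L$. The join and meet equations force $\rho\colon K\to L$ to be a lattice homomorphism, the equations $x_{\iota(a)}\approx a$ give $\rho\circ\iota=\id L$, and from $\iota(0_L)=0_K$ and $\iota(1_L)=1_K$ we get $\rho(0_K)=0_L$ and $\rho(1_K)=1_L$, so $\rho$ is a \zo{} homomorphism and hence a morphism of $\oisps$. Thus $\rho$ is a retraction of $\iota$, proving that $L$ is an absolute retract.

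I expect the main obstacle to be conceptual rather than computational: the crux is realizing that the existence of a retraction can be encoded as the solvability of a single finite system of lattice equations, and that the finiteness of $K$ is exactly what lets the weaker hypothesis (i), which only quantifies over finite $\Sigma$, already suffice. All the genuinely hard structural work of singling out the at most $4$-element boolean lattices is carried out in Proposition~\ref{prop:wRw}, which I am assuming; once that is in hand, the three equivalences collapse into the two routine homomorphism-chasing arguments above.
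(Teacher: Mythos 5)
Your proof is correct and is essentially the argument the paper itself relies on: the paper proves Corollary~\ref{cor:lGfnG} by citing the proof of Proposition~1.1 of Cz\'edli and Molkhasi~\cite{czgmolkhasi}, which establishes exactly the equivalence you spell out --- absolute retracts are strongly algebraically closed via pushing a solution forward along the retraction, and algebraic closedness yields a retraction by encoding the (finite) operation tables of $K$ as a finite system of equations --- and then invokes Proposition~\ref{prop:wRw}. The only difference is that you write the homomorphism-chasing out in full where the paper defers to the reference.
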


\section{Proofs}\label{sect:proof}
Whenever we deal with a slim semimodular lattice, we always assume that a \emph{planar diagram} of this lattice \emph{is fixed}. Some of the concepts we are going to use depend on how this diagram is chosen but this will not cause any trouble. Below, for the sake of our proofs, we recall some concepts and statements from earlier papers. These concepts are also given in the book chapter Cz\'edli and Gr\"atzer \cite{czgggltsta}.

\begin{figure}[ht]
\centerline
{\includegraphics[width=\textwidth]{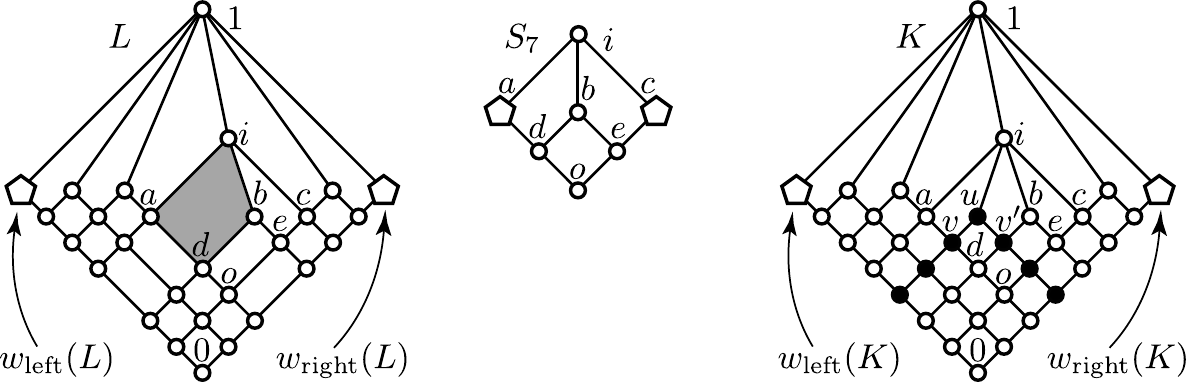}}      
\caption{Three slim patch lattices}\label{fig1}
\end{figure}

A cover-preserving four-element boolean sublattice of a slim semimodular lattice $L$ is called a \emph{$4$-cell}. Given a $4$-cell $C$ of this $L$, we can perform
a \emph{fork extension} of $L$ at the $4$-cell $C$ by \emph{adding a fork} to $C$
as it is shown in Figure 5 of Cz\'edli and Schmidt \cite{czgschtvisual}; this is also shown here in Figure~\ref{fig1}, where we add a fork to the grey-filled 4-cell of $L$ to obtain $K$. (The new elements, that is, the elements of $K\setminus L$, are black-filled.)  A fork extension yields  a proper extension of $L$, that is, $L$ is a proper sublattice of any of its fork extensions. (By a \emph{proper} sublattice we mean a sublattice distinct from the original lattice.) We know from Cz\'edli and Schmidt \cite[Theorem 11]{czgschtvisual} that
\begin{equation}
\parbox{7cm}{a fork extension of a slim semimodular lattice is again a slim semimodular lattice.}
\label{pbx:frkXtSpS}
\end{equation}

For a slim semimodular lattice $L$, keeping in mind that it is planar and a planar diagram of $L$ is fixed, the  \emph{left boundary chain} and the \emph{right boundary chain} of $L$ are denoted by $\lbound L$ and $\rbound L$, respectively. The union of these two chains is the \emph{boundary} of $L$; it is denoted by $\Bnd L$. 
The elements of $\Bnd L$ and the edges among these elements form a polygon in the plane, the \emph{boundary polygon} of (the fixed diagram of)  $L$. 
Following  Gr\"atzer and Knapp~\cite{gratzerknapp3}, a slim semimodular lattice $L$ is a \emph{slim rectangular lattice} if $\lbound L$ has exactly one doubly irreducible element, denoted by $\lc L$, $\rbound L$ has exactly one doubly irreducible element, denoted by $\rc L$, and these two elements are complementary, that is,  
\begin{equation}
\lc L\wedge \rc L=0\quad\text{ and }\quad \lc L\vee \rc L=1.
\label{eqRdgzmDtmKmL}
\end{equation}
Note that the  original definition of slim patch lattices in Cz\'edli and Schmidt~\cite{czgschtpatchwork} is the following:
\begin{equation}
\parbox{8.8cm}{a lattice $L$ is a \emph{slim patch lattice} if it is a slim rectangular lattice such that $\lc L$ and $\rc L$ are coatoms.}
\label{pbx:pLrdmGhszrs}
\end{equation}
The doubly irreducible coatoms of each of the three slim patch lattices in Figure~\ref{fig1} are the pentagon-shaped elements. 

For $u\in L$, the \emph{ideal} $\set{x\in L: x\leq u}$ and the filter
 $\set{x\in L: x\geq u}$ are denoted by $\ideal u$ and $\filter u$, respectively.  
We know from Lemmas 3 and 4 of Gr\"atzer and Knapp~\cite{gratzerknapp3} that for any rectangular lattice $L$,
\begin{align}
&\text{$\ideal{\lc L}$, $\ideal{\rc L}$, $\filter{\lc L}$, and $\filter{\rc L}$ are chains,}\label{eq:cskRbnSDtTvna}\\
&\text{$\filter{\lc L}\setminus\set 1\subseteq \Mir L$, \quad $\filter{\rc L}\setminus\set 1\subseteq \Mir L$,}\label{eq:cskRbnSDtTvnb}
\\
&\text{$\ideal{\lc L}\setminus\set 0\subseteq \Jir L$, \ and \ $\ideal{\rc L}\setminus\set 0\subseteq \Jir L$.}\label{eq:cskRbnSDtTvnc}\end{align}

The direct product of two non-singleton chains is a \emph{grid}. Grids are distributive slim rectangular lattices. We know from (the last sentence of) Theorem 11 and  Lemma 22 in \cite{czgschtvisual} that 
\begin{equation}
\parbox{9cm}{$L$ is a
\emph{slim rectangular} lattice if and only if it can be obtained from a  grid by adding forks, one by one, in a finite (possibly zero) number of steps.}
\label{pbx:sRctGlr}
\end{equation}
For $x\in \Jir L$ and $y\in \Mir L$, the unique lower cover of $x$ and the unique cover of $y$ are denoted by $\lcov x$ and $\ucov y$, respectively. 
Following Cz\'edli and Schmidt~\cite{czgschtvisual} again, a \emph{corner} of a slim semimodular lattice $L$ is a doubly irreducible element $u$ (necessarily on the boundary of $L$) such that $\ucov u$ covers exactly two elements and $\lcov u$ is covered by exactly two elements. Note that $\lc L$ and $\rc L$ defined 
in the paragraph preceding \eqref{pbx:pLrdmGhszrs} need not be corners; they are only \emph{weak corners} in the sense of in Cz\'edli and Schmidt~\cite{czgschtvisual}. 
We know from Lemma 21 of \cite{czgschtvisual} that 
\begin{equation}
\parbox{9cm}{a lattice $L$ is a slim semimodular lattice if and only if $|L|\leq 2$ or $L$ can be obtained  from a slim rectangular lattice by removing finitely many corners, one by one.}
\label{pbx:sLcRGns}
\end{equation}
It is trivial by definitions that 
\begin{equation}
\parbox{8.3cm}{If $L,K\in\lensps$ and $L$ is obtained from $K$ by removing a corner of $K$, then the embedding $\iota\colon L\to K$ defined by $\iota(x)=x$ for all $x\in L$ is a monomorphism in $\lensps$.}
\label{pbx:rmCrsKcCs}
\end{equation}

The \emph{congruence lattice} of a lattice $L$ will be denoted by $\Con L$, and $\diag L$ will stand for the \emph{equality relation} $\set{(x,x): x\in L}$, which is the least element of $\Con L$. For $\Theta\in \Con L$ and $H\subseteq L$, the \emph{restriction}  $\set{(x,y)\in H^2: (x,y)\in \Theta}$  of $\Theta$ to $H$ will be denoted by $\restrict \Theta H$. For $\Theta \in \Con L$ and $u\in L$, we denote by $u/\Theta$ the \emph{$\Theta$-block} $\set{x\in L: (u,x)\in\Theta}$ of $u$. We often write $u\relTheta v$ instead of $(u,v)\in \Theta$. 

Armed with the tools and notations listed so far in this section, we are prepared for the proof of our main result.

\begin{proof}[Proof of Theorem \ref{thmmain}]
First, to show the implication \mref{thmmainb} $\Rightarrow$  \mref{thmmaina}, assume that $L$ is a maximal object for $\lensps$. Now if $\iota\colon L\to K$ is a morphism in $\lensps$, then $\iota$ is an isomorphism since $L$ is a maximal object. Hence, we can let $\rho:=\iota^{-1}\colon K\to L$, and $\rho\circ\iota=\id L$ is clear. Thus, $L$ is an absolute retract for $\lensps$, proving the implication \mref{thmmainb} $\Rightarrow$  \mref{thmmaina}.

We recall  from Gr\"atzer and Nation \cite{gr-nation} that
\begin{equation}
\parbox{7.6cm}{if $C$ is a maximal chain of a finite semimodular 
lattice $L$, then every congruence $\Theta$ of $L$ is determined by its restriction $\restrict\Theta C$ to $C$.}
\label{pbx:GNlmrM}
\end{equation}
Note that Gr\"atzer and Nation \cite{gr-nation} proved a  more general result by allowing ``finite length" instead of ``finite''. 

Next, to show the implication  \mref{thmmaina} $\Rightarrow$  \mref{thmmainb}, assume that $L\in\lensps$ is an absolute retract for $\lensps$, and let $\iota\colon L\to K$ be a monomorphism in $\lensps$. By \eqref{eq:monoinj}, $\iota$ is injective. Take a maximal chain $C$ in $L$. Then $\iota(C):=\set{\iota(x): x\in C}$ is a maximal chain in $K$ since $\iota$ is a length-preserving embedding.
Since $L$ is an absolute retract for $\lensps$, $\iota$ has a retraction $\rho\colon K\to L$ in $\lensps$. For later reference, let us mention that in the rest of our argument proving the implication  \mref{thmmaina} $\Rightarrow$  \mref{thmmainb},
\begin{equation}
\parbox{7.7cm}{to prove that our length-preserving embedding $\iota$ is an isomorphism, we only use that $\rho$ is a lattice homomorphism such that $\rho\circ \iota=\id L$, but we do not use that $\rho$ is a morphism in $\lensps$.}
\label{pbx:wRhgRsZkrTpsz}
\end{equation}
Let $\Theta\in \Con K$ denote the kernel of $\rho$. Since $\rho\circ\iota=\id L$ gives that, for every $\iota(x)\in\iota(C)$, $\rho(\iota(x))=x$, the restriction of $\rho$ to $\iota(C)$ is injective. 
Hence,  $\restrict {\Theta}{\rho(C)}=\diag{\rho(C)}$. Applying 
\eqref{pbx:GNlmrM}, we obtain that $\Theta=\diag K$. Hence, $\rho$ is injective. But it is also surjective since $\rho\circ\iota = \id L$. Thus, $\rho$ is a lattice isomorphism, whereby it has an inverse, $\rho^{-1}\colon L\to K$, which is also a lattice isomorphism. Using $\rho\circ \iota=\id L$, we obtain that
$\iota=\id K \circ \iota =(\rho^{-1}\circ \rho )\circ\iota= \rho^{-1}\circ (\rho \circ\iota)= \rho^{-1}\circ \id L=\rho^{-1}$, showing that $\iota$ is a lattice isomorphism. Thus, $L$ is a maximal object of $\lensps$, and we have proved that  \mref{thmmaina} $\Rightarrow$  \mref{thmmainb}.

Next, to prove  the implication \mref{thmmainb} $\Rightarrow$  \mref{thmmainc}, assume that $L$ is a maximal  a maximal object of $\lensps$ such that $|L|\geq 3$. It follows from \eqref{pbx:sLcRGns} and   \eqref{pbx:rmCrsKcCs} that $L$ is a slim rectangular lattice. Hence, $\lc L$ and $\rc L$ make sense. We claim that
\begin{equation}
\text{$\lc L$ and $\rc L$ are coatoms.}
\label{eq:txtZkYrsvNTtr}
\end{equation}
For the sake of contradiction, suppose that \eqref{eq:txtZkYrsvNTtr} fails and, say, $\lc L$ is not a coatom. Then $\filter {\lc L}$, which is a chain by \eqref{eq:cskRbnSDtTvna}, has at least three elements. Hence, there are unique elements $u,v\in \filter{\lc L}$ such that $u\prec v\prec 1$. Extend $L$ to a poset $K:=L\cup\set{d}$ so that $d\notin L$ and $u\prec d\prec 1$. In the diagram of $K$, we position $d$  to the left of $v$.  
Since $u\in \Mir L$ by \eqref{eq:cskRbnSDtTvnb},  we have that $u=\lcov d$ has exactly two covers in $K$. Hence, it follows from Proposition 10(i) of Cz\'edli and Schmidt~\cite{czgschtvisual} that $K\in \lensps$. Clearly, $\iota\colon L\to K$ defined by $\iota(x)=x$ for all $x\in L$ is a length-preserving embedding. However, $\iota$ is not an isomorphism since $|K|=|L\cup\set d|=|L|+1>|L|$. This contradict the assumption that $L$ is a maximal object of $\lensps$ and proves the implication  \mref{thmmainb} $\Rightarrow$  \mref{thmmainc}.

Finally, to prove the validity of \mref{thmmainc} $\Rightarrow$  \mref{thmmainb}, observe that the one-element lattice and the two-element lattice are trivially maximal objects of $\lensps$. So we assume that $L$ is a slim patch lattice, and we need to show that it is a maximal object of $\lensps$. 
In fact, it suffices to show that an isomorphic copy of $L$ is a maximal object in $\lensps$; this is why we can take the map $x\mapsto x$ instead of a more involved embedding below. 

For the sake of contradiction, suppose that $L$ is not a maximal object and take a slim semimodular lattice $K$ such that $L$ is a proper sublattice of $K$ and the map $L\to K$ defined by $x\mapsto x$ is a length-preserving embedding. In particular, we have that $0:=0_K=0_L$ and $1:=1_K=1_L$.  Fix a planar diagram of $K$, and pick an element $p\in K\setminus L$. 
Finite semimodular lattices satisfy the Jordan--H\"older chain condition (that is, any two maximal chains of them are of the same length), and  $\length K=\length L$. Hence, we conclude that $L$ is a cover-preserving sublattice of $K$; that is,  for all $x,y\in L$, if $x\prec_L y$ in $L$, then $x\prec_K y$ in $K$.  
Visually, this means that $p$ is not on any edge of $L$, and similarly for any other element of $K\setminus L$.
Therefore, if we remove the elements of $K\setminus L$ with all edges adjacent to them, then we  get a planar diagram of $L$; let this diagram be what we fix for $L$.  We know that  $L\subset K$. (As opposed to some other branches of mathematics,  ``$\subset$'' is the conjunction of ``$\subseteq$'' and ``$\neq$''.) 

By the classical Jordan curve theorem, the boundary polygon of $L$ divides the plane into three pairwise disjoint subsets:  an interior region,  an exterior region, and the (set of geometrical points on the) boundary polygon. The first two subsets are topologically open while the third one is closed.  
Since $p$ is not on any edge of $L$, it is not on the boundary polygon.  Therefore, $p$ is either in the interior region of the boundary polygon, or it is in the exterior region of the boundary polygon; these two possibilities need separate treatments.
  
First, assume that $p$ is in the interior region of the boundary polygon. Since this region  is divided into 4-cells by Lemma 4 of Gr\"atzer and Knapp~\cite{gratzerknapp1} and $p$ is not on any edge of $L$, the element $p$ is inside the topologically open region determined by a 4-cell $H=\set{b=u\wedge v, u, v, t=u\vee v}$ of $L$.  By Kelly and Rival \cite[Proposition 1.4]{kellyrival}, we obtain that $b< p<t$. Using the Jordan--H\"older chain condition and $b\prec u\prec t$, it follows that $b\prec p \prec t$.   Thus, $u$, $v$, and $p$ are three different covers of $b$,  which contradicts Lemma 8 of  Gr\"atzer and Knapp~\cite{gratzerknapp1}.  

Second, assume that $p$ is in the exterior region of the boundary polygon. Note that $0 <p < 1$. Take a maximal chain $C$ in $K$ that contains $p$.
Since $p$ is in the (topologically open) exterior region of the boundary polygon but $0\in C$  is not, there are consecutive elements $r\prec s$ of $C$ such that 
$s$ is in  the exterior region of the boundary polygon but $r$ is not. Using  planarity or, to be more precise,  Kelly and Rival~\cite[Lemma 1.2]{kellyrival}, we obtain that  $r\in\Bnd L$. In particular, $r\in L$. By left--right symmetry, we can assume that $r\in\lbound L$.
By the already mentioned Lemma 8 of  Gr\"atzer and Knapp~\cite{gratzerknapp1}, $r$ has at most two covers in $K$. Since $s\in K$ is a cover of $r$ and $r\neq 1$ yields that $r$ also has at least one cover in $L$, we obtain that $r$ has exactly one cover in $L$. That is, $r\in \Mir L$.
Since $L$ is a slim patch lattice, 
$\lbound L$ is the disjoint union of $\ideal{\lc L}\setminus\set{\lc L}$, $\set{\lc L}$, and $\set 1$.
If we had that $r\in \ideal{\lc L}\setminus\set{\lc L}$, then $r\in \Mir L$ would belong to $\Jir L$ by \eqref{eq:cskRbnSDtTvnc}, whence  $r$, $\lc L$, and $\rc L$ would be three different doubly irreducible elements of $L$, contradicting Definition ~\ref{def:slimpatch}. Hence, taking also $r\neq 1$ into account, we have that   $r=\lc L$. Thus, $\lc L\prec_K s\leq p<1$, which gives that $1$ does not cover $\lc L$ in $K$. This contradicts the facts that $\lc L$ is a coatom in $L$ and $K$ is a cover-preserving extension of $L$. 

Regardless of the position of $p$, we have obtained a contradiction. This yields the implication  \mref{thmmainc} $\Rightarrow$  \mref{thmmainb} and completes the proof of Theorem ~\ref{thmmain}.
\end{proof}

\begin{proof}[Proof of Proposition \ref{prop:wRw}]
To prove the ``only if'' part, assume that $L\in \oisps$ is an absolute retract for $\oisps$. We are going to show that $L$ is a maximal object of $\lensps$. Let $\iota\colon L\to K$ be a monomorphism in $\lensps$. By \eqref{eq:monoinj}, $\iota$ is a length-preserving embedding. 
Clearly, $\iota$ is \zo{}, so it is also a monomorphism in $\oisps$. 
Since we have assumed that $L$ is  an absolute retract for $\oisps$, there exists a \zo{} homomorphism $\rho\colon K\to L$ such that $\rho\circ\iota=\id L$. Applying \eqref{pbx:wRhgRsZkrTpsz}, it follows that $\iota$ is an isomorphism. 
This shows that $L$ is a maximal object of $\lensps$, as required.
Thus, we obtain from Theorem~\ref{thmmain} that $|L|\leq 2$ or $L$ is a slim patch lattice. We can assume that $L$ is a slim patch lattice since lattices with at most two elements are boolean. 
Then $|L|\geq 4$. 

For the sake of contradiction, suppose that $|L|\geq 5$. We know from \eqref{pbx:sRctGlr} that $L$ can be obtained from a grid $G$ by adding forks. When we add a fork to a slim rectangular lattice $R$, then $\lc R$ and $\rc R$ remain doubly irreducible and the lengths of the intervals $[\lc R, 1]$ and $|\rc R, 1]$ do not change.
Since $L$ is not only rectangular but it is a patch lattice, $\lc G$ and $\rc G$ are coatoms of $G$. This means that $G$ is the 4-element boolean lattice. Then, since $|G|=4<5\leq |R|$, it follows that at least one fork has been added to $G$ to obtain $L$. 
Thus, thinking of the last fork added, we obtain that   the lattice $S_7$ given in the middle of  Figure~\ref{fig1} is a cover-preserving sublattice of $L$. 

The elements of this $S_7$ will be denoted as in Figure~\ref{fig1}. Take the upper left 4-cell of this $S_7$; it is grey-filled on the left of  Figure~\ref{fig1}. Add a fork to $L$ to  obtain a new lattice denoted by $K$; see on the right of the figure.  The new meet-irreducible element is denoted by $u$, its lower covers are $v$ and $v'$, as it is shown in the figure. By \eqref{pbx:frkXtSpS}, $K\in \oisps$. Clearly, the embedding $\iota\colon L\to K$ defined by $x\mapsto x$ is a morphism in $\oisps$.  Since we have assumed that $L$ is an absolute retract for $\oisps$, $\iota$ has  a retraction $\rho\colon K\to L$ in $\oisps$.  That is, $\rho$ is a \zo{}-homomorphism such that $\rho\circ\iota=\id L$. In particular,  $\rho(x)=x$ for all $x\in L$. As in the previous proof, we let $\Theta=\ker \rho$. Observe that since $\rho(x)=x$ for all $x\in L$, 
\begin{equation}
\text{the restriction  $\Theta$ to $L$ is $\diag L$, that is, $\restrict\Theta L=\diag L$.}
\label{txtRswgRmtrMh}
\end{equation}

Since  $\rho(u)= (\rho\circ\iota)(\rho(u))=\rho((\iota\circ\rho)(u))  = \rho(\iota(\rho(u)))=\rho(\rho(u))$, we have that  $u\relTheta \rho(u)$.  Also,
$u\neq \rho(u)$ since $\rho(u)\in L$ but $u\notin L$. Depending on whether $u\not\leq \rho(u)$ or $u\not\geq \rho(u)$, we have that $u>u\wedge \rho(u)$ or $u<u\vee \rho(u)$. 
Since  $(u,\rho(u))\in \Theta$ gives that  $\set{u\wedge \rho(u),  u\vee \rho(u)}\subseteq  u/\Theta$, it follows that  $u$ is not a minimal element or not a maximal element of $u/\Theta$. Using that $u/\Theta$ is a convex subset of $K$ and taking into account that  $u$ covers or covered by exactly three elements, $i$, $v$, and $v'$, we obtain that at least one of $(u,i)$, $(v,u)$, and $(v',u)$ belongs to $\Theta$. This gives us three cases to consider; each of them leads to contradiction in a different way.

If $(u,i)\in\Theta$, then it follows immediately from Gr\"atzer's Swing Lemma, see his paper \cite{ggswinglemma} (alternatively, see   Cz\'edli, Gr\"atzer, and  Lakser \cite{czggghlswing} 
or Cz\'edli and Makay \cite{czgmakay} for secondary sources) 
that $(b,i)\in\Theta$, contradicting \eqref{txtRswgRmtrMh}. 
If $(v,u)\in \Theta$, then $(a,i)=(a\vee v,a\vee u)\in \Theta$ contradicts \eqref{txtRswgRmtrMh}.
If $(v',u)\in\Theta$, then $(b,i)=(b\vee v',b\vee u)\in\Theta$, contradicting \eqref{txtRswgRmtrMh} again.   Thus, $|L|\geq 5$ leads to a contradiction and it follows that $|L|=4$. Finally, a four-element patch lattice is boolean, and we have shown the ``only if'' part of Proposition~\ref{prop:wRw}.

Next, in order to prove the ``if'' part, assume that $L$ is a boolean lattice with at most four elements, $L\in \oisps$, and $\iota\colon L\to L'$ is a monomorphism in $\oisps$. That is, $\iota$ is a lattice embedding preserving $0$ and $1$. We need to find a morphism $\rho$ in $\oisps$ such that $\rho\circ\iota=\id L$. If $|L|=1$, then the preservation of 0 and 1 gives that $|L'|=1$, $\iota$ is an isomorphism, and we can let $\rho:=\iota^{-1}$. Hence, in the rest of the proof, it suffices to deal with the cases $|L|=2$ and $|L|=4$. Before doing so, let us recall from Gr\"atzer~\cite[Corollary 14]{ggapplczgschtseq} that
\begin{equation}
\parbox{7.9cm}{if $1\neq p\in \filter{\lc K}\cup\filter{\rc K}$ in
a slim rectangular lattice $K$, then $\ideal p$ is a prime ideal of $K$.}
\label{pbx:prMdlJgbbW}
\end{equation}

First, assume that $|L|=2$. Then $L=\set{0,1}$. We can assume that $|L'|>2$ since otherwise $\iota$ is an isomorphism and $\rho:=\iota^{-1}$ does the job. It follows from \eqref{pbx:sLcRGns} and \eqref{pbx:rmCrsKcCs} that there exists a slim
rectangular lattice $K$ and a monomorphism 
 $\iota'\colon L'\to K$ in $\oisps$. (In fact, $\iota'$ belongs even to $\lensps$.) By \eqref{eq:monoinj}, $\iota'$ is a lattice embedding.  We know from \eqref{pbx:prMdlJgbbW} that $I:=\ideal{\lc K}$ is a prime ideal of $K$. Hence, the map 
\[
\rho'\colon K\to L,\text{ defined by }
x\mapsto
 \begin{cases}
   0,&\text{ if }x\in I,\cr
   1,&\text{ if }x\in K\setminus I,
 \end{cases}
\]
is a lattice homomorphism. In fact, $\rho'$ is a morphism in $\oisps$. Let $\rho:=\rho'\circ\iota'$. Then $\rho$ is a map from $L'$ to $L$. Since both $\rho'$ and $\iota'$ are morphisms in $\oisps$, so is their product, $\rho$. By the same reason, $\rho\circ \iota\colon L\to L$ is again a morphism in $\oisps$. Since $L=\set{0,1}$, $\id L$ is the only  $L\to L$ morphism belonging to $\oisps$. Hence, $\rho\circ\iota=\id L$, showing that $\rho$ is a retraction of $\iota$. Therefore,  $L$ is an absolute retract for $\oisps$.

Second, assume that $|L|$ is the four-element boolean lattice and $\iota\colon L\to L'$ is a monomorphism in $\oisps$.
As in the previous case,  \eqref{eq:monoinj}, \eqref{pbx:sLcRGns}, and \eqref{pbx:rmCrsKcCs} yield that 
there exists  a slim rectangular lattice $K$ and a lattice embedding $\iota'\colon L'\to K$ such that $\iota'$ is a monomorphism in $\oisps$.
As previously, $I:=\ideal{\lc K}$ is a prime ideal of $K$, and so is $J:=\ideal{\rc K}$.  The atoms of $L$ will be denoted by $u$ and $v$, and we let $\wh u:=(\iota'\circ\iota)(u)=\iota'(\iota(u))$ and $\wh v:=(\iota'\circ\iota)(v)$. Of course, we have that $(\iota'\circ\iota)(0)=0$ and $(\iota'\circ\iota)(1)=1$ since we are in $\oisps$. Since $\iota'\circ\iota$ is a lattice embedding, $\wh u\wedge \wh v=0$, $\wh u\vee \wh v=1$, and $|\set{0,\wh u, \wh v, 1}|=4$. Since $I$ and $J$ are  prime ideals, we can observe that $\set{\wh u,\wh v}\not\subseteq I$ and $\set{\wh u,\wh v}\not \subseteq K\setminus I$, and analogously for $J$, since otherwise  $\wh u\vee \wh v=1$ or $\wh u\wedge \wh v=0$ would fail.   That is
\begin{equation}
|\set{\wh u,\wh v}\cap I|=1 \quad\text{ and }\quad|\set{\wh u,\wh v}\cap J|=1.
\label{eq:kTsjbbnWskTmHf}
\end{equation}
Thus, using that  $u$ and $v$ play  symmetrical roles, 
we can assume that $\wh u\in I$ but $\wh v\notin I$. It follows from \eqref{eqRdgzmDtmKmL} that $I\cap J=\set{0}$.
This fact and  $0\neq \wh u\in I$ give that $\wh u\notin J$. Combining this with \eqref{eq:kTsjbbnWskTmHf}, we obtain that $\wh v\in J$. So \eqref{eq:kTsjbbnWskTmHf} gives that 
\begin{equation}
\wh u\in I,\quad \wh u\notin J,\quad \wh v\in J,\quad\text{and}\quad \wh v\notin I. 
\label{eq:zdpVshtjK}
\end{equation}
Applying \eqref{eqRdgzmDtmKmL} again, we conclude easily that 
\begin{equation}
\lc K\in I,\,\text{  }\lc K\notin J,\,\text{  } \rc K \in J,\,\text{  }\text{and}\,\text{  } \rc K\notin I. 
\label{eq:jjTpVprLWv}
\end{equation}
Since $I$ is a prime ideal, the equivalence $\alpha$ 
with blocks $I$ and $K\setminus I$ is a congruence of $K$. Similarly, let $\beta$ be the congruence with blocks $J$ and $K\setminus J$. Let $\gamma:=\alpha\wedge \beta=\alpha\cap\beta\in\Con K$. Since each of $\alpha$ and $\beta$ has only two blocks, $\gamma$ has at most four blocks. 
It follows from \eqref{eq:jjTpVprLWv} that $0$, $1$, $\lc K$, and $\rc K$ are in different $\gamma$-blocks.  Hence $\gamma$ has exactly four blocks. Using that $\lc K$ and $\rc K$ are complementary, $K/\gamma$ is isomorphic to $L$. Hence, the map
\begin{equation}
\rho'\colon K\to L,\text{ defined by }x\mapsto
\begin{cases}
u,&\text{if }(x,\lc K)\in\gamma,\cr
v,&\text{if }(x,\rc K)\in\gamma,\cr
0,&\text{if }(x,0)\in\gamma,\cr
1,&\text{if }(x,1)\in\gamma.\cr
\end{cases}
\label{eq:sdgTlsgPlkSn}
\end{equation} 
is a lattice homomorphism and it is a morphism belonging to $\oisps$.
Comparing \eqref{eq:zdpVshtjK} and \eqref{eq:jjTpVprLWv}, we obtain that $(\wh u,\lc K)\in\gamma$ and $(\wh v,\rc K)\in\gamma$. 
Hence, it follows from \eqref{eq:sdgTlsgPlkSn} that $\rho'(\wh u)=u$,
$\rho'(\wh v)=v$. These two equalities and the fact that all the homomorphisms occur in the proof are morphisms in $\oisps$ imply that $\rho'\circ(\iota'\circ\iota)=\id L$. In other words,
 $(\rho'\circ\iota')\circ\iota=\id L$. Thus, with $\rho:=\rho'\circ\iota'$, which belongs to $\oisps$, we have that $\rho\circ \iota=\id L$. Since $\rho$ is an $L'\to L$ morphism in $\oisps$,
$L$ is an absolute retract for $\oisps$, as required. 
Hence, the ``if'' part holds and  the proof of Proposition~\ref{prop:wRw} is complete.
\end{proof}

Finally, we give a joint proof of the corollaries.

\begin{proof}[Proof of Corollaries \eqref{cor:PclSd} and \eqref{cor:lGfnG}] Compared to $\sps$ defined in \eqref{pbx:sScTrzrVrbRstR}, 
the categories $\oisps$ and $\lensps$ have some special properties. However, the proof of Proposition 1.1 of Cz\'edli and Molkhasi \cite{czgmolkhasi} does not use these properties. Thus, we conclude the validity of Corollaries \eqref{cor:PclSd} and \eqref{cor:lGfnG} from  \cite{czgmolkhasi}.
\end{proof}

\end{document}